\newtheorem{theorem}{Theorem}[section]
\newtheorem{lemma}[theorem]{Lemma}
\newtheorem{remark}[theorem]{Remark}
 \newcommand{\C}{\mathbb C}
 \newcommand{\ex}{ {\rm e} }
\newenvironment{proof}[1][Proof]{\noindent \emph{#1.} }{\hfill \ \rule{0.5em}{0.5em}}
\makeatletter\@addtoreset{equation}{section}\makeatother
\makeatletter\@addtoreset{figure}{section}\makeatother
\makeatletter\@addtoreset{table}{section}\makeatother
\begin{document}
\title{Two-points problem for an evolutional first order equation in Banach space}
\author{T.Ju.Bohonova \thanks{National Aviation University of Ukraine,
 1, Komarov ave. , 03058 Kyiv, Ukraine
({\tt bohonoff@astral.kiev.ua}).},  V.B.Vasylyk
\thanks{Institute of Mathematics of NAS of Ukraine,
 3 Tereshchenkivs'ka Str., Kyiv-4, 01601, Ukraine
({\tt vasylyk@imath.kiev.ua}).}
 }

\date{\null}
\maketitle

\begin{abstract}
 Two-point nonlocal  problem for the first order differential evolution equation with an operator coefficient in a Banach space $X$ is considered. An exponentially convergent algorithm is proposed and justified in assumption that the operator coefficient is strongly positive and some existence and uniqueness conditions are fulfilled. This algorithm leads to a system of linear equations that can be solved by fixed-point iteration. The algorithm provides exponentially convergence in time that in combination with fast algorithms on spatial variables can be efficient treating such problems. The efficiency of the proposed algorithms is demonstrated by numerical examples.
\end{abstract}

\noindent\emph{AMS Subject Classification:} 65J10, 65M12, 65M15,
46N20, 46N40, 47N20, 47N40

\noindent\emph{Key Words:} First order differential evolution equations in
Banach space, nonlocal problem, unbounded operator coefficient, operator exponential, exponentially convergent algorithms

\section{Introduction}

The m-point initial (nonlocal) problem for a differential equation with the nonlocal  condition $u(t_0) + g(t_1;\dots ; t_p; u) = u_0$ and a given function $g$ on a given point set  $P=\{0= t_0 < t_1 <\dots < t_p\}$ , is one of the important topics in the study of differential equations. Interest in such problems originates mainly from some physical problems with a control of the solution at $P$. For example, when the function $g(t_1;\dots ; t_p; u)$ is linear, we will have the periodic problem $u(t_0)=u(t_1)$. Problems with nonlocal conditions arise in the theory of physics of plasma \cite{samde}, nuclear physics \cite{LeunCh}, mathematical chemistry \cite{LeuOrt}, waveguides \cite{Gord} etc. Two-point problem is also useful for considering the finale value problem \cite{ShowFVP}.

Differential equations with operator coefficients in some Hilbert or Banach space can be considered as meta-models for systems of partial or ordinary differential equations and are suitable for investigations using tools of the functional analysis (see e.g. \cite{clem, krein}). Nonlocal problems can also be considered within this framework \cite{Byszewski2, Byszewski3}.

Discretization methods for differential equations in Banach and Hilbert spaces were intensively studied in the last decade (see e.g. \cite{fernandez-lubich-palencia-schaedle, ghk, ghk3, gm2, lopez-fernandez2, lopez-fernandez1, SSloanTho:98, sst1, vas1, vas2} and the references therein). Methods from \cite{ ghk, ghk3,  gm2, lopez-fernandez2, lopez-fernandez1, sst1, vas1, vas2} possess an exponential convergence rate, i.e. the error estimate in an appropriate norm is of the type ${\cal O}(\ex^{-N^\alpha}),$ $\alpha>0$ with respect to a discretization parameter $N \to \infty$. For a given tolerance $\varepsilon$ such discretization provides optimal or nearly optimal computational complexity \cite{ghk}.

In the present paper we consider the problem

\begin{equation}\label{nle-1}
\begin{split}
& \frac{d u(t)}{d t}+A_1(t)u(t)=f_1(t), \\
& u(0)+\alpha u(1)=\varphi ,
\end{split}
\end{equation}
where $A_1(t)$ is a densely defined closed (unbounded) operator with the domain $D(A_1)$ independent of $t$ in a Banach space $X$, $\varphi$ is given vector and $f_1(t)$ is given vector-valued function, $\alpha \in \mathbb{R}.$ We suppose that the operator $A_1(t)$ is strongly positive; i.e. there exists a positive constant $M_R$ independent of $t$ such that on the rays and outside a sector $\Sigma_\theta =\{z \in \C: 0 \le arg(z)\le \theta, \theta \in (0,\pi/2)\}$ the following resolvent estimate holds:
\begin{equation} \label{as1}
\|(zI-A_1(t))^{-1}\|\le \frac{M_R}{1+|z|}.
\end{equation}
This assumption implies that there exists a positive constant
$c_\kappa$ such that ( see \cite{fujita}, p.103)
\begin{equation} \label{as2}
\|A_1^\kappa(t)e^{-sA_1(t)}\|\le c_\kappa s^{-\kappa}, \quad s>0,
\quad \kappa \ge 0 .
\end{equation}
Our further assumption is that there exists a real positive
$\omega$ such that
\begin{equation} \label{as3}
\|e^{-sA_1(t)}\|\le e^{- \omega s} \quad \forall s,\; t \in [0,1]
\end{equation}
(see \cite{pazy}, Corollary 3.8, p.12, for corresponding
assumptions on $A_1(t)$). Let us also assume that the following conditions hold true

\begin{equation} \label{as4}
\|[A_1(t)-A_1(s)]A_1^{-\gamma}(t)\| \le L_{1,\gamma}|t-s| \quad
\forall t,\; s, \; 0\le \gamma < 1,
\end{equation}

\begin{equation} \label{as5}
\|A_1^\gamma(t)A_1^{-\gamma}(s)-I\|\le L_\gamma |t-s| \quad
\forall t,\; s \in [0,1].
\end{equation}

We suppose also that
\begin{equation}\label{cond-f}
	f_1(t) \in C(0,1;X).
\end{equation}

The aim of this paper is to construct an exponentially convergent approximation to the problem \eqref{nle-1} for a differential equation with two-points nonlocal condition in abstract setting. The paper is organized as follows. In Section \ref{sect2} we discuss the existence and uniqueness of the solution as well as its representation through input data.  A numerical algorithm is presented in section \ref{numalg}. The main result of this section is theorem \ref{nle-main0} about the convergence rate of the proposed discretization. The next section \ref{sect6} we represent some numerical example which confirm theoretical results from the previous sections.

\section{Existence and uniqueness of the solution}\label{sect2}

It is well known, that for $\alpha=0$ the problem \eqref{nle-1} has a unique solution under the assumptions \eqref{as1}-\eqref{cond-f} (se e.g. \cite{pazy, krein}). This solution can be write down as follows:
\begin{equation}\label{locrep}
 u(t)=U(t,0)u(0)+\int_0^t U(t,s) f_1(s)ds=U(t,0)\varphi +\int_0^t U(t,s) f_1(s)ds,
\end{equation}
where $U(t,s)$ is an evolution operator that corresponds to \eqref{nle-1} for $\alpha=0.$

 Let us study conditions when there is unique solution for the two-points problem \eqref{nle-1}. We have from \eqref{locrep}
\[
u(1)=U(1,0)u(0)+\int_0^1 U(1,s) f_1(s)ds.
\]
Substituting this expression into the nonlocal condition we obtain
\[
u(0)=\left[ I+ \alpha U(1,0) \right]^{-1} \left[ \varphi -\alpha \int_0^1 U(1,s) f_1(s) ds \right],
\]
and for $u(t)$ we have
\begin{equation}\label{solrep}
	u(t)=U(t,0)\left[ I+ \alpha U(1,0) \right]^{-1} \left[ \varphi -\alpha \int_0^1 U(1,s) f_1(s) ds \right] +\int_0^t U(t,s) f_1(s)ds.
\end{equation}

It is necessary to establish conditions on $\alpha$ for the existence of $u(t).$ In fact, we have to explore when exists $\left[ I+ \alpha U(1,0) \right]^{-1}.$ So, we obtain using estimate for $U(t,s)$ (see e.g. \cite{pazy, krein}).
\[
\left\| \left[ I+ \alpha U(1,0) \right]^{-1} \right\| \le \left[ 1- |\alpha| \left\|U(1,0) \right\| \right] ^{-1} \le \left[ 1- |\alpha| M \right] ^{-1} \le C,
\]
for small enough $\alpha $ ($\alpha < M^{-1}$).

\section{Numerical algorithm}\label{numalg}

We use the approach developed in \cite{gm1} and \cite{bgmv} to construct numerical method for solving problem \eqref{nle-1}. First of all we change variable in \eqref{nle-1} by $ t \to \frac{1+t}{2}$ and for $v(t)=u\left(\frac{1+t}{2}\right)$ we have
\begin{equation}\label{nle-post2}
	\begin{split}
& \frac{d v(t)}{d t}+A(t)v(t)=f(t), \\
& v(-1)+\alpha v(1)=\varphi, 
 \end{split}
\end{equation}
where $A(t)=\frac{1}{2}A_1\left(\frac{1+t}{2}\right),$ $f(t)=\frac{1}{2} f_1\left(\frac{1+t}{2}\right),$  
 
 We choose a mesh $\omega_{n}=\{t_k, \; k=0,...,n\}$ of $n+1$ various
points  on $[-1,1]$ that are Chebyshev-Gauss-Lobatto nodes $t_k= \cos\left(\frac{n-k}{n}\pi\right),$ and set $\tau_k=t_k-t_{k-1}.$ Let
\begin{equation} \label{nle-fs1}
\begin{split}
\overline{A}(t)=&A_k=A(t_k), t \in (t_{k-1},t_k], \quad k=\overline{1,n},\\
&A_0=A(-1).
\end{split}
\end{equation}
Let us rewrite the problem (\ref{nle-post2}) in the equivalent form
\begin{equation} \label{nle-fs2}
\begin{split}
& \frac{dv}{dt}+\overline{A}(t)v=[\overline{A}(t)-A(t)]v(t)+f(t), \quad t\in (-1,1) \\
&v(-1)=\varphi -\alpha v(1).
\end{split}
\end{equation}

Note, that now all operators on the left side of these equations are constant on each subinterval and piece-wise constant on the whole interval $[-1,1].$

On each subinterval we can write down the equivalent to \eqref{nle-fs2} integral equation
\begin{equation}\label{nle-pwsol-n}
\begin{split}
	v(t)=&\ex^{-A_k(t-t_{k-1})}v(t_{k-1}) +\int_{t_{k-1}}^t \ex^{-A_k(t-s)}\left[ A_k -A(t)\right]v(s) ds +\int_{t_{k-1}}^t \ex^{-A_k(t-s)}f(s) ds, \quad \\ 
	&t\in [t_{k-1},t_k], \quad	k=\overline{2,n}, 
\end{split}	
\end{equation}
\begin{equation}\label{nle-pwsol-1}
	v(t)=\ex^{-A_1(t+1)}\left[\varphi -\alpha v(1) \right] +\int_{-1}^t \ex^{-A_1(t-s)}\left[ A_1 -A(t)\right]v(s) ds +\int_{-1}^t \ex^{-A_1(t-s)}f(s) ds, \quad t\in [-1,t_1]. 
\end{equation}

 Let
\begin{equation}\label{nle-s4}
P_{n}(t;v)=P_{n}v=\sum_{j=0}^n v(t_j) L_{j,n}(t), \\
\end{equation}
be the interpolation polynomial for $v(t)$ on the mesh $\omega_n$, $x=( x_0,...,x_n), x_i \in X$ given vector and
\begin{equation}\label{nle-s5}
 P_{n}(t;y)=P_{n}x=\sum_{j=0}^n x_j L_{j,n}(t)
\end{equation}
the polynomial that interpolates $x$, where
\[
L_{j,n}(s)=\frac{T_{n}^{\prime }(s)(1-s^{2})}{\frac{d}{ds}[(1-s^{2})T_{n}^{\prime }(s)]_{s=s_{j}}(s-s_{j})},\quad j=0,...,n
\]
 are the  Lagrange fundamental polynomials. Substituting $P_{n}(s;x)$ for $v(s)$, $x_k$ for $v(t_k)$ and then setting $t=t_k$ in (\ref{nle-pwsol-n}) we arrive at the following system of linear equations with respect to the unknown $x_k:$
\begin{equation} \label{nle-slar}
\begin{split}
&x_0 +\alpha x_n=\varphi,\\
&x_k=\ex^{-A_k \tau_k}x_{k-1} +\sum_{j=0}^n \alpha_{kj}x_j +\phi_k, \quad k=\overline{1,n},
\end{split}
\end{equation}
which represents our algorithm. Here we use the notations
\begin{equation} \label{nle-poz}
\begin{split}
&\alpha_{kj}=\int_{t_{k-1}}^{t_k}\ex^{-A_k (t_k-s)}[A_k-A(s)]L_{j,n}(s)ds,  \\
&\phi_k =\int_{t_{k-1}}^{t_k}\ex^{-A_{k}(t_{k} -s)}f(s)ds, \quad k=\overline{1,n}, \quad j=\overline{0,n},
\end{split}
\end{equation}
and suppose that we have an algorithm to compute these
coefficients.
 
 For the error $z=(z_{1},...,z_{n})$, with $z_{k}=v(t_k)-x_k$ we have the relations
\begin{equation} \label{nle-poh}
\begin{split}
&z_0+\alpha z_n =0, \\
&z_{k}=\ex^{-A_k \tau_k}z_{k-1} +\sum_{j=0}^n \alpha_{kj}z_{j} +\psi_{k}, \quad k=\overline{1,n},
\end{split}
\end{equation}
where
\begin{equation}\label{nle-poz2}
\psi_{k}= \int_{t_{k-1}}^{t_k}\ex^{-A_k(t_k -s)}[A_k -A(s)][v(s) -P_n(s;v)]ds, \quad k=\overline{1,n},
\end{equation}

In order to represent algorithm (\ref{nle-slar}) in a block-matrix form
we introduce the matrix
\begin{equation}\label{nle-ms}
S=
  \begin{pmatrix}
    I & 0 & 0 & \cdot & \cdot & \cdot & 0 & \alpha \sigma_0 \\
    -\sigma_1 & I & 0 & \cdot & \cdot & \cdot & 0 & 0 \\
    0 & -\sigma_2 & I & \cdot & \cdot & \cdot & 0 & 0 \\
    \cdot & \cdot & \cdot & \cdot & \cdot & \cdot & \cdot & \cdot \\
    0 & 0 & 0 & \cdot & \cdot & \cdot & -\sigma_{n} & I \
  \end{pmatrix},
\end{equation}
where $\sigma_0=A_0^\gamma A_n^{-\gamma},$ $\sigma_k=\ex^{-A_k \tau_k}A_k^\gamma A_{k-1}^{-\gamma},$ $k=\overline{1,n},$ the matrix $B=\{\tilde{\alpha}_{k,j}\}_{k,j=0}^n$ with $\tilde{\alpha}_{k,j}=A_k^\gamma \alpha_{k,j}A_j^{-\gamma},$ $k=\overline{1,n},$ $j=\overline{0,n},$ and $\tilde{\alpha}_{0,j}=0,$ $j=\overline{0,n},$  the vectors
\begin{equation}\label{nle-vect}
  \tilde{x}=\begin{pmatrix}
    A_0^\gamma x_0 \\
    A_1^\gamma x_1 \\
    \cdot \\
    \cdot \\
    A_n^\gamma x_n \
  \end{pmatrix},\;
  \phi=\begin{pmatrix}
    A_0^\gamma \varphi \\
    A_1^\gamma \phi_1 \\
    \cdot \\
   \cdot \\
    A_n^\gamma \phi_n \
  \end{pmatrix},\;
  \tilde{z}=\begin{pmatrix}
    A_0^\gamma z_0 \\
    A_1^\gamma z_1 \\
    \cdot \\
    \cdot \\
    A_n^\gamma z_n \
  \end{pmatrix},\;
  \psi=\begin{pmatrix}
    0 \\
    A_1^\gamma \psi_1 \\
    \cdot \\
    \cdot \\
    A_n^\gamma \psi_n \
  \end{pmatrix}.
\end{equation}

It is easy to check that for the (left) inverse
\begin{equation}\label{nle-Sinv}
	S^{-1}=\delta \left(R_1-R_2\right),
\end{equation}
where
\[
\delta= \left(I+ \alpha \sigma_0\sigma_1 \dots \sigma_n \right)^{-1},
\]
\[
R_1=
\begin{pmatrix}
 I & 0&   \cdots & 0 & 0 \\
 {\sigma}_{1} &I  &  \cdots & 0 & 0 \\
 {\sigma}_{2} {\sigma}_{1} &{\sigma}_{2}  &  \cdots & 0 & 0 \\
  \cdot & \cdot &  \cdots & \cdot & \cdot \\
 {\sigma}_{n} \cdots {\sigma}_{1} & {\sigma}_{n}\cdots {\sigma}_{2}
   & \cdots & {\sigma}_{n} & I
\end{pmatrix},
\]
\[
R_2= \alpha s_0
\begin{pmatrix}
 0 & \sigma_n \dots \sigma_2& \sigma_n \dots \sigma_3 &    \cdots & \sigma_n & I \\
 0 & 0 &\sigma_1\sigma_n\dots \sigma_3  &  \cdots & \sigma_1 \sigma_n & \sigma_1 \\
 \cdot & \cdot & \cdot & \cdots & \cdot & \cdot \\
 0 & 0 & 0  &  \cdots & 0 & \sigma_{n-1}\dots \sigma_1 \\
 0& 0 & 0  & \cdots & 0 & 0
\end{pmatrix}.
\]

\begin{remark}
Using results of \cite{ gm5, ghk,ghk3} one can get a parallel and sparse approximations with an exponential convergence rate of the operator exponentials contained in ${S}^{-1}$ and as a consequence a parallel and sparse approximation of ${S}^{-1}$.
\end{remark}

We multiply the equations in  \eqref{nle-slar} and the equation in \eqref{nle-poh} by $A_k^\gamma ,$ $k=\overline{0,n}$ and obtain
\begin{equation} \label{nle-nslar}
\begin{split}
& A_0^\gamma x_0 +\alpha A_0^\gamma x_n =A_0^\gamma \varphi , \\
& A_k^\gamma x_k= \ex^{-A_k \tau_k} A_k^\gamma x_{k-1} + \sum_{j=0}^n \tilde{\alpha}_{kj}A_j^\gamma x_j + A_k^\gamma \phi_k,  \quad k=\overline{1,n},
\end{split}
\end{equation}

\begin{equation} \label{nle-npoh}
\begin{split}
& A_0^\gamma z_0 +\alpha A_0^\gamma z_n =0, \\
& A_k^\gamma z_k= \ex^{-A_k \tau_k} A_k^\gamma z_{k-1} + \sum_{j=0}^n \tilde{\alpha}_{kj}A_j^\gamma z_j + A_k^\gamma \psi_k,  \quad k=\overline{1,n},
\end{split}
\end{equation}
Then the systems \eqref{nle-nslar}, \eqref{nle-npoh} can be written down in the matrix form using notations \eqref{nle-ms}, \eqref{nle-vect}  as
\begin{equation} \label{nle-mform}
\begin{split}
&S \tilde{x} = B \tilde{x}+ \phi,\\
&S \tilde{z} = B \tilde{z}+ \psi.
\end{split}
\end{equation}

Next, for a vector $v=(v_1,v_2,...,v_n)^T$ and a block operator matrix $A=\{a_{ij}\}_{i,j=1}^n$ we introduce a vector norm
\[
|\|v\|| \equiv |\|v\||_1=\max_{1 \le k \le n} \|v_k\|,
\]
and the consistent matrix norm
\[
|\|A\|| \equiv |\|A\||_1 =\max_{1 \le i \le n}\sum_{j=1}^n
\|a_{i,j}\|.
\]

Due to \eqref{as5} we have $|\| A_{k}^{\gamma}A_{k-1}^{-\gamma}\| |= | \|A_{k}^{\gamma}A_{k-1}^{-\gamma}-I+I|\|\le 1+L_\gamma \tau_k,$ $\| \sigma_0 \| =\|A_0^\gamma A_n^{-\gamma}\|\le 1+L_\gamma T.$ In our case $T=2.$ So, we have the following, using these estimates
\[
\|\sigma_k\| = \| \ex^{-A_k \tau_k} A_{k}^{\gamma}A_{k-1}^{-\gamma}\| \le \ex^{-\omega \tau_k} \|A_{k}^{\gamma}A_{k-1}^{-\gamma} \|\le \ex^{-\omega \tau_k} \left(1+L_\gamma \tau_k \right),
\]
\[
\| \delta \| =\| \left(I+ \alpha \sigma_0\sigma_1 \dots \sigma_n \right)^{-1} \|\le \left( 1-| \alpha | \, \|\sigma_0\| \, \|\sigma_1\| \, \|\sigma_2\| \dots \|\sigma_n\|  \right)^{-1} 
\]
\[
 \le \left( 1-| \alpha | \, \left( 1+2L_\gamma \right) \, \ex^{-\omega \tau_1} \left(1+L_\gamma \tau_1 \right) \, \ex^{-\omega \tau_2} \left(1+L_\gamma \tau_2 \right) \dots \ex^{-\omega \tau_n} \left(1+L_\gamma \tau_n \right)  \right)^{-1}
\]
\[
 \le \left( 1-| \alpha | \, \left( 1+2L_\gamma \right) \, \ex^{-2\omega } \left(1+\frac{2L_\gamma}{n} \right)^n  \right)^{-1}\le \left( 1-| \alpha | \, \left( 1+2L_\gamma \right) \, \ex^{-2\omega } \ex^{2L_\gamma }  \right)^{-1} \le c,
\]
for $\alpha$ small enough.

In order to estimate the norm of matrix $S$ we have to estimate the norms of matrices $R_1,$ $R_2.$ In \cite{gm1} it was proved that for matrix similar to $R_1$ the estimate $|\| R_1\| |\le cn$ holds true. Let us estimate the norm of matrix $R_2.$
\[
\begin{split}
& |\| R_2 \|| \le \left(1+2c\right)\left(1+e^{-\omega \tau}(1+c
\tau)+\cdots+[e^{-\omega \tau}(1+c \tau)]^{n-1}\right)\\
&\le \left(1+2c\right) \left(1+(1+c \tau)+\cdots+(1+c \tau)^{n-1}\le \frac{(1+c
\tau)^{n}-1}{c \tau}\right)\le \left(1+2c\right)\frac{e^{2c}}{c \tau}\le cn.
\end{split}
\]

Using these estimates we obtain that
\begin{equation}\label{nle-ocS}
	|\|S^{-1}\||\le c n.
\end{equation}
 
 It was proved an estimate for the matrix $B$ in \cite{gm1}:
 \begin{equation}\label{nle-ocB}
	|\|B\||\le c n^{\gamma -2}\ln(n).
\end{equation}
So we can formulate the following assertion

\begin{lemma}\label{nle-l-oc}
 Let the assumptions \eqref{as1}-\eqref{as5} are fulfilled then the estimates \eqref{nle-ocS}, \eqref{nle-ocB} hold true.
\end{lemma}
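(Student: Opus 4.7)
The plan is to assemble the two estimates from the ingredients already collected in the excerpt. For $|\|S^{-1}\|| \le cn$, I would use the factorization $S^{-1} = \delta(R_1 - R_2)$ and sub-multiplicativity of the chosen block-matrix norm, bounding the three factors in turn. First, the scalar-like factor $\delta$: combining $\|\sigma_k\| \le \ex^{-\omega \tau_k}(1+L_\gamma \tau_k)$ from (\ref{as3})--(\ref{as5}), $\|\sigma_0\| \le 1+2L_\gamma$ from (\ref{as5}) with $T=2$, and the telescoping inequality $\prod_{k=1}^n (1+L_\gamma \tau_k) \le \ex^{L_\gamma \sum \tau_k} \le \ex^{2L_\gamma}$, the Neumann expansion of $\delta = (I + \alpha \sigma_0\sigma_1 \cdots \sigma_n)^{-1}$ converges for $\alpha$ small enough and yields $\|\delta\| \le (1 - |\alpha|(1+2L_\gamma)\ex^{2L_\gamma - 2\omega})^{-1} \le c$. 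This is exactly the calculation already carried out before the lemma statement, so it only needs to be recorded in the proof.

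Next I would treat $R_1$ and $R_2$ separately. For $R_1$ the bound $|\|R_1\|| \le cn$ is established in \cite{gm1} for a structurally identical matrix of cumulative products of the $\sigma_k$, and transfers verbatim (the only modification here is in the definition of $\sigma_0$, which enters $R_1$ only as the identity block). For $R_2$, row $i$ contains products of $\sigma_k$ of lengths ranging from $0$ to $n-1$, each of norm at most $(1+2L_\gamma)\prod_k (1+L_\gamma \tau_k)$ after discarding the exponential damping $\ex^{-\omega \tau_k}$. Summing along a row and invoking the elementary bound $(1+c\tau)^{n}-1 \le c\tau\, \ex^{2c}/(c\tau_{\min})$ together with $\tau_{\min} \ge c/n$ for the Chebyshev--Gauss--Lobatto mesh (this is the step already sketched in the excerpt) gives $|\|R_2\|| \le cn$. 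Combining, $|\|S^{-1}\|| \le \|\delta\|(|\|R_1\|| + |\|R_2\||) \le cn$.

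For the second estimate $|\|B\|| \le c n^{\gamma-2} \ln n$, I would transcribe the argument from \cite{gm1}. Each block $\tilde{\alpha}_{kj} = A_k^\gamma \alpha_{kj} A_j^{-\gamma}$ reads as the operator integral $\int_{t_{k-1}}^{t_k} A_k^\gamma \ex^{-A_k(t_k - s)}[A_k - A(s)]A_k^{-\gamma} \cdot A_k^\gamma A_j^{-\gamma}\, L_{j,n}(s)\, ds$. Controlling $\|A_k^\gamma \ex^{-A_k(t_k-s)}\| \le c_\gamma (t_k - s)^{-\gamma}$ via (\ref{as2}), $\|[A_k - A(s)] A_k^{-\gamma}\| \le L_{1,\gamma}(t_k - s)$ via (\ref{as4}), $\|A_k^\gamma A_j^{-\gamma}\|$ uniformly bounded via (\ref{as5}), and using the Lebesgue constant bound $\|L_{j,n}\|_{L^\infty(-1,1)} = O(\ln n)$ for Chebyshev--Gauss--Lobatto nodes, each integrand is of size $O((t_k-s)^{1-\gamma} \ln n)$. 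Integration over the subinterval and row-summation over the $n+1$ columns then produce the announced rate $n^{\gamma-2}\ln n$, using $\tau_k = O(n^{-1})$ for interior nodes and an adjusted argument at the endpoints.

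The main obstacle is the $R_2$ bookkeeping: $n$ row entries, each a product of up to $n-1$ operator factors, must be shown to sum to at most $O(n)$, which is possible only because the exponential damping controls the $(1+L_\gamma\tau_k)$-growth uniformly through the geometric-series device displayed in the text. Once this and the $R_1$-type estimate from \cite{gm1} are in hand, both inequalities follow by the straightforward combinations above.
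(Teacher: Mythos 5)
Your proposal is correct and follows essentially the same route as the paper: the paper's proof of this lemma consists precisely of the displayed bounds for $\|\sigma_k\|$ and $\|\delta\|$, the citation of \cite{gm1} for $|\|R_1\||\le cn$, the geometric-series estimate for $|\|R_2\||$, and a second citation of \cite{gm1} for the bound on $B$. The only difference is that you sketch the derivation of $|\|B\||\le cn^{\gamma-2}\ln n$ from \eqref{as2}, \eqref{as4}, \eqref{as5} and the Lebesgue constant, whereas the paper simply quotes this estimate from \cite{gm1}; your sketch is consistent with that reference.
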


Using \eqref{nle-mform} we have
\begin{equation} \label{nle-msol}
\begin{split}
& \tilde{x} = \left[ E- S^{-1}B \right]^{-1} S^{-1} \phi,\\
& \tilde{z} = \left[ E- S^{-1}B \right]^{-1} S^{-1} \psi,
\end{split}
\end{equation}
where $E$ is a diagonal matrix with unit operators $I$ on diagonal. Using lemma \ref{nle-l-oc} we obtain that 
\begin{equation}\label{nle-ocsb}
	|\|S^{-1}B\||\le c n^{\gamma -1}\ln(n)\to 0, \; n\to \infty.
\end{equation}
It means that for $n$ large enough there exists the matrix $\left[ E- S^{-1}B \right]^{-1}$ and 
\[
 \left|\left\| \left[ E- S^{-1}B \right]^{-1} \right\|\right|\le c.
\]
Consequently we obtain the following stability estimates from \eqref{nle-msol} using lemma \ref{nle-l-oc}:
\begin{equation}\label{nle-stab}
\begin{split}
	&|\| \tilde{x} \|| \le cn|\| \phi \|| ,\\
	&|\| \tilde{z} \|| \le cn|\| \psi \|| .
\end{split}	
\end{equation}
 
Let $\Pi_{n}$ be the set of all polynomials in $t$ with vector coefficients of degree less or equal then $n.$ In complete analogy with \cite{babenko, szegoe, szegoe1} the following Lebesgue inequality for vector-valued functions can be proved
\begin{equation}\label{nle-ocInt}
\| u(t)-P_{n}(t; u)\|_{C[-1,1]}\equiv \max_{t \in [-1,1]}\| u(t)-P_{n}(t; u)\| \le (1+\Lambda_n)E_n(u),
\end{equation}
with the error of the best approximation of $u$ by polynomials of degree not greater then $n$
\begin{equation}\label{nle-bestapp}
E_n( u)=\underset{p \in \Pi_{n}}{\text{inf}}\max_{t \in [-1,1]}\| u(t)-p(t)\|.
\end{equation}

Now, we can go over to  the main result of this section.

\begin{theorem}\label{nle-main0}
Let the assumptions of Lemma \ref{nle-l-oc} with $\gamma<1$ hold, then there exists a positive constant $c$ such that
\begin{enumerate}
\item For $n$ large enough it holds
\begin{equation}\label{nle-ocpoh}
|\| \tilde{z} \||\le c n^{\gamma-1}\cdot \ln{n} \cdot E_n(A_0^\gamma v),\\
\end{equation}
where $v$ is the solution of \eqref{nle-post2};
  \item The first equation in  \eqref{nle-mform} can be written in the form
\begin{equation}\label{nle-pr-fpiter}
 \tilde{x}=S^{-1}B\tilde{x}+S^{-1} \phi ,
\end{equation}
and can be solved by the fixed point iteration
\begin{equation}\label{nle-fpiter}
 \tilde{x}^{(k+1)}=S^{-1}B\tilde{x}^{(k)}+ S^{-1} \phi ,\; k=0,1,...; \; \tilde{x}^{(0)}- arbitrary,
\end{equation}
with the convergence rate of an geometrical progression with the denominator $q \le c n^{\gamma -1}\ln(n)<1$ for $n$ large enough.
\end{enumerate}
\end{theorem}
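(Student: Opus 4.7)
The plan is to derive both parts from the stability bounds in \eqref{nle-stab} together with Lemma \ref{nle-l-oc}. Part (2) is essentially a direct corollary of the operator norm bounds already in hand; the bulk of the work lies in part (1), where I need a sharp bound on the residual vector $\psi$ of \eqref{nle-poz2} in the $|\|\cdot\||$-norm, which combined with $|\|\tilde z\||\le cn\,|\|\psi\||$ yields \eqref{nle-ocpoh}.

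For part (1) I would bound each component $A_k^\gamma\psi_k$ separately. The key algebraic step is to split the integrand of \eqref{nle-poz2} as
\[
A_k^\gamma e^{-A_k(t_k-s)}\cdot\bigl([A_k-A(s)]A_k^{-\gamma}\bigr)\cdot\bigl(A_k^\gamma A_0^{-\gamma}\bigr)\cdot\bigl(A_0^\gamma v(s)-P_n(s;A_0^\gamma v)\bigr),
\]
using that the interpolation operator commutes with the constant operator $A_0^\gamma$. Then I apply \eqref{as2} to control $\|A_k^\gamma e^{-A_k(t_k-s)}\|\le c_\gamma(t_k-s)^{-\gamma}$, \eqref{as4} to get $\|[A_k-A(s)]A_k^{-\gamma}\|\le L_{1,\gamma}(t_k-s)$, and \eqref{as5} (in the form already used for $\sigma_0$ with $T=2$) to get $\|A_k^\gamma A_0^{-\gamma}\|\le 1+2L_\gamma$. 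The interpolation factor is controlled uniformly by \eqref{nle-ocInt} together with the Chebyshev Lebesgue constant estimate $\Lambda_n=O(\ln n)$, giving $\max_s\|A_0^\gamma v(s)-P_n(s;A_0^\gamma v)\|\le c\ln(n)\,E_n(A_0^\gamma v)$. Integrating over $[t_{k-1},t_k]$ produces $\int_{t_{k-1}}^{t_k}(t_k-s)^{1-\gamma}\,ds=\tau_k^{2-\gamma}/(2-\gamma)$, and the bound $\tau_k\le\pi/n$ for Chebyshev--Gauss--Lobatto spacings yields $\|A_k^\gamma\psi_k\|\le cn^{\gamma-2}\ln(n)\,E_n(A_0^\gamma v)$ uniformly in $k$. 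Multiplying by the factor $n$ from the stability inequality gives exactly the required $n^{\gamma-1}\ln n$.

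For part (2), applying $S^{-1}$ on the left of the first equation in \eqref{nle-mform} gives \eqref{nle-pr-fpiter}. The iteration error $e^{(k)}=\tilde x^{(k)}-\tilde x$ satisfies $e^{(k+1)}=S^{-1}B\,e^{(k)}$, so geometric convergence with ratio $|\|S^{-1}B\||$ follows whenever that quantity is strictly less than one. Submultiplicativity together with \eqref{nle-ocS} and \eqref{nle-ocB} gives $|\|S^{-1}B\||\le cn^{\gamma-1}\ln n$, which is the claimed $q$ and is less than one for large $n$ precisely because $\gamma<1$. The main obstacle is the delicate bookkeeping in part (1): one must insert and remove the right fractional powers of $A_k$ and $A_0$ so that the singular factor $(t_k-s)^{-\gamma}$ from \eqref{as2} combines with the Lipschitz factor $(t_k-s)$ from \eqref{as4} into an integrable kernel whose $s$-integral contributes exactly $\tau_k^{2-\gamma}$, and then verify that the Chebyshev--Gauss--Lobatto spacing bound $\tau_k\le\pi/n$ together with the $n$ in $|\|S^{-1}\||$ yields the sharp exponent $\gamma-1$ rather than something weaker.
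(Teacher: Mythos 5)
Your proposal is correct and follows essentially the same route as the paper's own proof: the same insertion of $A_k^{-\gamma}A_k^{\gamma}A_0^{-\gamma}A_0^{\gamma}$ into the integrand of \eqref{nle-poz2}, the same use of \eqref{as2}, \eqref{as4}, \eqref{as5} and the Lebesgue inequality \eqref{nle-ocInt} to get $|\|\psi\||\le c n^{\gamma-2}\ln n\, E_n(A_0^\gamma v)$, combined with the stability bound \eqref{nle-stab} for part (1), and the contraction estimate \eqref{nle-ocsb} for part (2). Your write-up is in fact slightly more explicit than the paper's (e.g.\ the $\tau_k\le\pi/n$ spacing bound and the iteration-error recursion are spelled out), but there is no substantive difference in the argument.
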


\begin{proof}
For $\tilde{z}$ we have the second estimate in \eqref{nle-stab}. The norm of the first summand on the right hand side of this inequality can be estimated in the following way

\[
|\|\psi \||=\max_{1 \le k \le n} \left\| \int_{t_{k-1}}^{t_k} \left\{ A_k^\gamma \ex^{-A_k (t_k -s)} [A_k-A(s)]A_k^{-\gamma}(A_k^{\gamma} A_0^{-\gamma})(A_0^\gamma v(s) -P_n(s;A_0^\gamma v))\right\} d s \right\| 
\]
\[
\le c \max_{1 \le k \le n}  \int_{t_{k-1}}^{t_k} |t_k -s|^{-\gamma} |t_k -s|\, \|A_0^\gamma v(s)-P_n(s;A_0^\gamma v)\|  d s  
\]
\[
\le c \tau_{max}^{2-\gamma}\left\|A_0^\gamma u(s)-P_{n}(\cdot;A_0^\gamma v)\right\|_{C[-1,1]} \le c \tau_{max}^{2-\gamma}(1+\Lambda_n)E_n(A_0^\gamma v).
\]

So, we obtain
\begin{equation}\label{nle-ocpsi}
	|\|\psi \|| \le c n^{\gamma-2}\cdot \ln{n} \cdot E_n(A_0^\gamma u),
\end{equation}

Now, the first assertion of the theorem follows from \eqref{nle-stab},
\eqref{nle-ocpsi}. The second one follows from \eqref{nle-mform} and \eqref{nle-ocsb}. \end{proof}

\section{Examples} \label{sect6}

Let us consider the following problem
\begin{equation}\label{nle-pr}
\begin{split}
&\frac{\partial u(x,t)}{\partial t} -\frac{\partial^2 u(x,t)}{\partial x^2} +q(x,t)u(x,t)=f(x,t), \\ 
&u(0,t)=u(1,t)=0,\\
&u(x,-1)+\alpha u(x,1)=\varphi(x),
\end{split}
\end{equation}
with $f(x,t)=\ex^{-\pi^2 (1+t)} \sin(\pi x) (1+t),$ $\alpha =0.5,$ $\varphi(x)=\left( 1+0.5 \ex^{-2 \pi^2}\right) \sin(\pi x),$ $q(x,t)=1+t.$ Then, the solution of this problem is $u(x,t)=\ex^{-\pi^2 (1+t)} \sin(\pi x).$

The problem \eqref{nle-pr} can be write down in the form \eqref{nle-post2} where the operator $A(t)$ is defined by 
\begin{equation}\label{nle-defA}
\begin{split}
& D(A(t))=D(A)=\{v \in H^2(0,1): v(0)=0, \, v(1)=0 \}, \\
&A(t)v=-\frac{\partial^2 v}{\partial x^2}+(1+t)v.
\end{split}
\end{equation}
Coefficients of the system \eqref{nle-nslar} were calculated by using the Fourier series expansion. The results of calculation are presented in tables confirm our theory above.

\begin{table*}[h]
    \begin{center}
        \begin{tabular}{|c|c|}
        \hline
      Point $t$ & $\varepsilon$ \\
      \hline
            -1 & 0.00005276 \\
            \hline
            -0.70710678 & 0.00097645 \\
            \hline
            0 & 0.00063440 \\
            \hline
            0.70710678 & 0.00029592 \\
            \hline
            1 & 0.00010552 \\
            \hline
        \end{tabular}
    \end{center}

    \caption{The error in the case $n=4,$ $x=0.5$}
    \label{tab:M4T1}
\end{table*}

\begin{table*}[h]
    \begin{center}
        \begin{tabular}{|c|c|}
        \hline
      Point $t$ & $\varepsilon$ \\
      \hline
            -1 & 8.12568908Ee-7 \\
            \hline
            -0.86602540 & 0.00010146 \\
            \hline
            -0.5 & 0.00030932 \\
            \hline
            0 & 0.00022136\\
            \hline
            0.5 & 0.00013419\\
            \hline
            0.86602540 & 0.00007182\\
            \hline
            1 & 0.00000162 \\
            \hline
        \end{tabular}
    \end{center}

    \caption{The error in the case $n=6,$ $x=0.5$}
    \label{tab:M6T1}
\end{table*}

\begin{table*}[h]
    \begin{center}
        \begin{tabular}{|c|c|}
        \hline
     Point $t$& $\varepsilon$ \\
      \hline
            -1 & 0.00000117 \\
            \hline
            -0.92387953 & 0.00000613 \\
            \hline
            -0.70710678 & 0.00004544 \\
            \hline
            -0.38268343 & 0.00005753 \\
            \hline
             0 & 0.00004745 \\
            \hline
             0.38268343 & 0.00003362 \\
            \hline
             0.70710678 & 0.00002096 \\
            \hline
             0.92387953 & 0.00000846 \\
            \hline
            1 & 0.00000235 \\
            \hline
        \end{tabular}
    \end{center}

    \caption{The error in the case $n=8,$ $x=0.5$}
    \label{tab:M8T1}
\end{table*}

\begin{table*}[h]
    \begin{center}
        \begin{tabular}{|c|c|}
        \hline
     Point $t$& $\varepsilon$ \\
      \hline
            -1 & 0.49451310e-8 \\
            \hline
            -0.96592582 & 0.14687232e-7 \\
            \hline
            -0.86602540 & 0.23393074e-6 \\
            \hline
            -0.70710678 & 0.54494052e-6 \\
            \hline
             -0.5 & 0.76722515e-6 \\
            \hline
            -0.25881904 & 0.82803283e-6 \\
            \hline
             0  & 0.76362937e-6 \\
            \hline
             0.25881904 & 0.63174173e-6 \\
            \hline
             0.5 & 0.47173110e-6 \\
            \hline
             0.70710678 & 0.30381367e-6\\
            \hline
             0.86602540 & 0.14341583e-6 \\
            \hline
             0.96592582 & 0.21271757e-7 \\
            \hline
             1 & 0.98902621e-8 \\
            \hline
        \end{tabular}
    \end{center}

    \caption{The error in the case $n=12,$ $x=0.5$}
    \label{tab:M12T1}
\end{table*}

\begin{table*}[h]
    \begin{center}
        \begin{tabular}{|c|c|}
        \hline
     Point $t$ & $\varepsilon$ \\
   \hline
            -1 &  0.20628738e-11 \\
            \hline
             -0.98078528 & 0.28602854e-10 \\
            \hline
             -0.92387953 & 0.48425552e-9 \\
            \hline
             -0.83146961 & 0.14258845e-8 \\
            \hline
             -0.70710678 & 0.25968220e-8 \\
            \hline
             -0.55557023 & 0.36339719e-8 \\
            \hline
             -0.38268343 & 0.42916820e-8 \\
            \hline
             -0.19509032 & 0.44975339e-8 \\
            \hline
             0 & 0.43045006e-8 \\
            \hline
             0.19509032 & 0.38169887e-8 \\
            \hline
             0.38268343 & 0.31414290e-8 \\
            \hline
             0.55557023 & 0.23686579e-8 \\
            \hline
             0.70710678 & 0.15787207e-8 \\
            \hline
             0.83146961 & 0.85640040e-9 \\
            \hline
             0.92387953 & 0.30309439e-9 \\
            \hline
             0.98078528 & 0.16809109e-10 \\
            \hline
             1 & 0.41257476e-11  \\
            \hline
        \end{tabular}
    \end{center}

    \caption{The error in the case $n=16,$ $X=0.5$}
    \label{tab:M16T1}
\end{table*}

{\bf Acknowledgment}. The authors would like to acknowledge the
support provided by the Deutsche Forschungsgemeinschaft (DFG).


\end{document}